\title{Quasi-log canonical pairs are Du Bois}
\author{Osamu Fujino and Haidong Liu}
\date{2019/6/8, version 0.10}
\subjclass[2010]{Primary 14J17; Secondary 14D07, 14E30}
\keywords{quasi-log canonical pairs, Du Bois singularities, 
Du Bois pairs, basic slc-trivial fibrations, variations of mixed Hodge structure}
\address{Department of Mathematics, Graduate School of Science, 
Osaka University, Toyonaka, Osaka 560-0043, Japan}
\email{fujino@math.sci.osaka-u.ac.jp}
\address{Department of Mathematics, Graduate School of Science, 
Kyoto University, Kyoto 606-8502, Japan}
\email{jiuguiaqi@gmail.com}
\DeclareMathOperator{\Nqklt}{Nqklt}
\DeclareMathOperator{\Gr}{Gr}
\DeclareMathOperator{\Supp}{Supp\!}
\newtheorem{thm}{Theorem}[section]
\newtheorem{lem}[thm]{Lemma}
\newtheorem{cor}[thm]{Corollary}
\theoremstyle{definition}
\newtheorem{defn}[thm]{Definition}
\newtheorem*{ack}{Acknowledgments} 
\newtheorem*{conventions}{Conventions}         
\begin{document}

\begin{abstract}
We prove that every quasi-log canonical pair has 
only Du Bois singularities. Note that our arguments are free from the 
minimal model program. 
\end{abstract}

\maketitle 

\tableofcontents

\section{Introduction}\label{f-sec1} 
The notion of quasi-log canonical pairs was first introduced by 
Florin Ambro (see \cite{ambro}) 
and is now known to be ubiquitous in the theory of 
minimal models. 
The main purpose of this paper is to establish: 

\begin{thm}\label{f-thm1.1}
Every quasi-log canonical pair has only Du Bois singularities. 
\end{thm}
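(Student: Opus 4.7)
The plan is to induct on $\dim X$ for the underlying scheme of the quasi-log canonical pair $[X, \omega]$. The base case is trivial. For the inductive step, let $W := \Nqklt(X)$ denote the non-qklt locus, i.e.\ the union of qlc strata of $X$ that are not maximal. By the adjunction formalism for qlc pairs due to Ambro and Fujino, $W$ inherits a natural quasi-log canonical structure of strictly smaller dimension, so by the inductive hypothesis $W$ has only Du Bois singularities.

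Next, I would reformulate the target through the notion of a \emph{Du Bois pair}: it suffices to prove that $(X, W)$ is a Du Bois pair, i.e.\ that the natural morphism $\mathcal{I}_W \to \underline{\Omega}^0_{X, W}$ in the derived category of $X$ is a quasi-isomorphism. Indeed, combined with the Du Bois property of $W$ this forces $X$ to be Du Bois via the distinguished triangle
\begin{equation*}
\underline{\Omega}^0_{X, W} \to \underline{\Omega}^0_X \to \underline{\Omega}^0_W \xrightarrow{+1}
\end{equation*}
compared with the short exact sequence $0 \to \mathcal{I}_W \to \mathcal{O}_X \to \mathcal{O}_W \to 0$ and the five lemma.

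To establish the Du Bois pair property for $(X, W)$, I would work with a quasi-log resolution $f\colon (Y, B_Y) \to X$, where $(Y, B_Y)$ is a globally embedded simple normal crossing pair supplied by the qlc structure, and aim to identify
\begin{equation*}
Rf_* \mathcal{O}_Y\bigl(-\lceil B_Y^{<1}\rceil - B_Y^{=1}\bigr) \;\simeq\; \underline{\Omega}^0_{X, W}
\end{equation*}
in the derived category of $X$. The left-hand side equals $\mathcal{I}_W$ in degree zero by definition of the qlc structure, and the higher direct images vanish by the Kollár-type vanishing theorem built into the qlc package. The identification itself is Hodge-theoretic: the snc pair $(Y, B_Y)$ carries the lowest piece of the Hodge filtration on its log de Rham complex, and its pushforward under $f$ should compute the corresponding graded piece of the Du Bois complex of $X$ by $E_1$-degeneration, using the theory of basic slc-trivial fibrations and the associated variations of mixed Hodge structure developed earlier in the paper.

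The main obstacle is precisely this last Hodge-theoretic identification: $f$ is in general neither flat nor finite, and $X$ is only a reducible, possibly non-equidimensional scheme, so standard statements about the compatibility of the Hodge filtration with direct images do not apply directly. Overcoming this requires constructing a proper hyperresolution of $X$ compatible with $(Y, B_Y) \to X$ and verifying $E_1$-degeneration of the associated Hodge-to-de Rham spectral sequence in this mixed context; this is where the basic slc-trivial fibration machinery carries the main technical weight.
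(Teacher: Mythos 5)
Your overall skeleton (induction on dimension, adjunction to make $W=\Nqklt(X, \omega)$ Du Bois, and reduction to showing that $(X,W)$ is a Du Bois pair) matches the paper, but the core step contains a genuine gap, and one intermediate claim is false. First, for the quasi-log resolution $f\colon (Y,B_Y)\to X$ supplied by the qlc structure itself, the higher direct images $R^if_*\mathcal O_Y(-\lceil B_Y^{<1}\rceil-B_Y^{=1})$ do \emph{not} vanish in general: $f$ may have positive-dimensional fibers with no relative positivity (take $X$ a point, $Y$ an elliptic curve, $B_Y=0$; this is a qlc pair with $R^1f_*\mathcal O_Y\neq 0$), and the Koll\'ar-type vanishing in the qlc package concerns twists by divisors that are nef and log big over a base, not this situation. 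Second, the identification $Rf_*\mathcal O_Y(-\lceil B_Y^{<1}\rceil-B_Y^{=1})\simeq \underline{\Omega}^0_{X,W}$ is precisely the hard point, and you leave it as an acknowledged obstacle rather than proving it; as written, the proposal does not contain a proof of the inductive step.

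The paper avoids computing $\underline{\Omega}^0_{X,W}$ altogether. It first reduces to $X$ irreducible (via seminormality of the strata and the sequence $0\to\mathcal O_X\to\mathcal O_{X_1}\oplus\mathcal O_{X_2}\to\mathcal O_{X_1\cap X_2}\to 0$), then to $X$ normal (via the normalization theorem of Fujino--Liu, which gives $R\nu_*\mathcal I_{\Nqklt(Z,\nu^*\omega)}=\mathcal I_{\Nqklt(X,\omega)}$ together with the splitting principle). For $X$ normal and irreducible, the structure theorem coming from basic slc-trivial fibrations (Theorem \ref{f-thm4.1}) produces a \emph{birational} morphism $p\colon X'\to X$ from a smooth variety with $K_{X'}+B_{X'}+M_{X'}=p^*\omega$, where $M_{X'}$ is nef over $X$, $B_{X'}^{<0}$ is $p$-exceptional, and $p(B_{X'}^{=1})=\Nqklt(X,\omega)$. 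Relative Kawamata--Viehweg vanishing then shows that $\mathcal I_W\to Rp_*\mathcal O_{X'}(-B_{X'}^{=1})$ admits a left inverse, and Kov\'acs's splitting principle (Lemma \ref{f-lem3.2}) yields that $(X,W)$ is a Du Bois pair --- one never needs to identify the target with $\underline{\Omega}^0_{X,W}$. To repair your argument, replace the attempted Hodge-theoretic identification by this splitting mechanism; the Hodge theory you invoke is indeed essential, but it is packaged inside the proof of Theorem \ref{f-thm4.1} rather than applied directly to the Du Bois complex.
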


Theorem \ref{f-thm1.1} is a complete generalization 
of \cite[Theorem 1.4]{kollar-kovacs} and \cite[Corollary 6.32]{kollar}. 
This is because we get the following corollary by combining 
Theorem \ref{f-thm1.1} with the main result of \cite{fujino-slc}. 

\begin{cor}[{\cite[Corollary 6.32]{kollar}}]\label{f-cor1.2}
Let $(X, \Delta)$ be a semi-log canonical 
pair. 
Then any union of slc strata of $(X, \Delta)$ is Du Bois. 
In particular, $X$ has only Du Bois singularities. 
\end{cor}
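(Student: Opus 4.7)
The plan is to deduce Corollary \ref{f-cor1.2} from Theorem \ref{f-thm1.1} using the quasi-log canonical structure on semi-log canonical pairs established in \cite{fujino-slc}. First, I would recall that the main result of \cite{fujino-slc} canonically endows every semi-log canonical pair $(X,\Delta)$ with the structure of a quasi-log canonical pair (with $\omega=K_X+\Delta$), and that under this structure the qlc strata coincide precisely with the slc strata of $(X,\Delta)$. This is the bridge that converts an slc problem into a qlc problem.

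Next, I would invoke the standard adjunction-type fact in Ambro's framework: any union $W$ of qlc strata of a qlc pair, equipped with the reduced induced scheme structure, inherits a natural quasi-log canonical structure from the ambient pair. Applying this to a union of slc strata of $(X,\Delta)$ produces a qlc pair whose underlying scheme is exactly the given union.

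Once this is set up, Theorem \ref{f-thm1.1} applies directly to $W$ and shows that $W$ has only Du Bois singularities. Taking $W=X$ (as $X$ itself is the union of all slc strata) recovers the ``in particular'' statement. Thus the corollary reduces to citing \cite{fujino-slc} for the qlc structure and hereditary property, and then quoting Theorem \ref{f-thm1.1}.

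The main obstacle is not mathematical depth but rather ensuring that the quoted hereditary statement (unions of qlc strata are qlc) is stated in exactly the form needed; this is routine in the qlc literature but must be cited carefully, since the whole content of the corollary is funneled through Theorem \ref{f-thm1.1} via that single adjunction principle.
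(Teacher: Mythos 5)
Your proposal is correct and follows essentially the same route as the paper: invoke \cite[Theorem 1.2]{fujino-slc} to put a compatible qlc structure on $(X,\Delta)$, use adjunction to make any union of slc strata a qlc pair, and conclude by Theorem \ref{f-thm1.1}. The only detail the paper adds is a preliminary reduction to the quasi-projective case by shrinking $X$ (legitimate since being Du Bois is Zariski local), which is needed because the qlc structure of \cite{fujino-slc} is constructed under a quasi-projectivity hypothesis.
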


By considering the definition and basic properties 
of quasi-log canonical 
pairs, Theorem \ref{f-thm1.1} can be seen as an ultimate 
generalization of \cite[Theorem 1.5]{kollar-kovacs} 
(see also Corollary \ref{f-cor4.2} below). 

\begin{cor}[{\cite[Theorem 1.5]{kollar-kovacs}}]\label{f-cor1.3}
Let $g:X\to Y$ be a proper surjective morphism 
with connected fibers between normal varieties. 
Assume that there exists an effective $\mathbb R$-divisor $\Delta$ on $X$ 
such that $(X, \Delta)$ is log canonical 
and $K_X+\Delta\sim _{\mathbb R, g}0$. 
Then $Y$ has only Du Bois singularities. 

More generally, let $W\subset X$ be a 
reduced closed subscheme that is 
a union of log canonical centers of $(X, \Delta)$. 
Then $g(W)$ is Du Bois. 
\end{cor}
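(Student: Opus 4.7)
\emph{Proof plan.} The strategy is to reduce the statement to Theorem \ref{f-thm1.1} by endowing $g(W)$ (and hence $Y$, as the special case $W = X$) with a natural quasi-log canonical structure, so that the Du Bois conclusion becomes an immediate application of the main theorem.

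To produce this structure, I would invoke the framework of Ambro and Fujino. A log canonical pair $(X, \Delta)$ is itself quasi-log canonical with ambient space $X$ and qlc centers equal to the lc centers, and any reduced union of lc centers inherits a qlc structure by adjunction. The relative $\mathbb{R}$-triviality $K_X + \Delta \sim_{\mathbb{R}, g} 0$ is exactly the hypothesis needed to push the qlc structure forward along $g$: one applies the canonical bundle formula, or rather the more general machinery of basic slc-trivial fibrations highlighted in the keywords, to a suitable birational model, and thereby obtains a qlc structure on $g(W)$ whose qlc centers are precisely the images $g(C)$ of the log canonical centers $C \subseteq W$ of $(X, \Delta)$. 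With this structure in hand, Theorem \ref{f-thm1.1} yields that $g(W)$ is Du Bois, proving both assertions of the corollary at once.

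The principal obstacle is not the final appeal to Theorem \ref{f-thm1.1}, but rather the verification that such a pushforward qlc structure genuinely exists. One must check that after passing to a log resolution, the direct image of the relevant structure complex is quasi-isomorphic to the structure sheaf of $g(W)$ placed in the appropriate degree, and that the associated boundary on $Y$ is of the required form. This is a Hodge-theoretic statement controlled by the variation of mixed Hodge structure associated to the smooth part of $g$, together with an $E_1$-degeneration argument to handle the boundary. Because the formalism of basic slc-trivial fibrations already packages precisely this Hodge-theoretic input, the construction can be carried out without any recourse to the minimal model program, in keeping with the abstract's emphasis.

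In summary, the proof I propose has only two real steps: first, transport the lc data on $X$ through $g$ to a qlc structure on $g(W)$ via the basic slc-trivial fibration machinery; second, quote Theorem \ref{f-thm1.1}. The technical heart is entirely contained in step one, while step two is a direct application.
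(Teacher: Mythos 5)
Your high-level reduction is the same as the paper's: realize $Y$ and $g(W)$ as quasi-log canonical pairs and quote Theorem \ref{f-thm1.1}. But you have mislocated the technical content, and the machinery you propose for ``step one'' is both unnecessary and not quite aimed at the right target. The paper does \emph{not} push the structure forward via the canonical bundle formula or basic slc-trivial fibrations; that input lives entirely inside the proof of Theorem \ref{f-thm1.1} (through Theorem \ref{f-thm4.1}), not in the derivation of this corollary. Instead one first gives $[X,\omega]$ with $\omega=K_X+\Delta$ a qlc structure by taking a log resolution $f:Z\to X$ with $f^*(K_X+\Delta)=K_Z+\Delta_Z$: since $(X,\Delta)$ is lc, $\Delta_Z=\Delta_Z^{\le 1}$, and since $\lceil -(\Delta_Z^{<1})\rceil$ is effective and $f$-exceptional, $f_*\mathcal O_Z(\lceil -(\Delta_Z^{<1})\rceil)\simeq\mathcal O_X$. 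Then the transport along $g$ is purely formal (Lemma \ref{f-lem2.2}): writing $\omega\sim_{\mathbb R}g^*\omega'$, the composition $g\circ f:(Z,\Delta_Z)\to Y$ already satisfies every condition in Definition \ref{f-def2.1}, because the required isomorphism $\mathcal O_Y\to (g\circ f)_*\mathcal O_Z(\lceil -(\Delta_Z^{<1})\rceil)$ follows from $g_*\mathcal O_X\simeq\mathcal O_Y$; no $E_1$-degeneration, no variation of mixed Hodge structure, and no quasi-isomorphism of derived direct images is needed here, since the definition asks only for a single sheaf-level isomorphism, not a statement about $Rh_*$. Your route of applying the canonical bundle formula to $g$ would produce a generalized pair $(Y,B_Y+M_Y)$ rather than directly the datum $h:(Z,\Delta_Z)\to Y$ that Definition \ref{f-def2.1} requires, so it is a genuine detour. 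Finally, $g(W)$ is a union of qlc strata of $[Y,\omega']$, so it is itself qlc by adjunction and hence Du Bois by Theorem \ref{f-thm1.1}; this last step you have right.
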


The proof of Theorem \ref{f-thm1.1} in this paper is different from 
the arguments in \cite{kollar-kovacs} and 
\cite{kollar}. 
This is mainly because we can not apply the minimal model 
program 
to quasi-log canonical pairs. 
We will use some kind of canonical bundle formula for reducible varieties, 
which follows from the theory of variations of mixed Hodge 
structure on cohomology with compact support, for the study of 
normal irreducible quasi-log canonical pairs. 
We want to emphasize that we do not use the minimal model 
program in this paper. 
From the Hodge theoretic viewpoint, 
we think that Koll\'ar and Kov\'acs (\cite{kollar-kovacs}) 
avoided 
the use of variations of mixed Hodge structure 
by taking dlt blow-ups, which need the minimal model program. 

\begin{ack}
The first author was partially 
supported by JSPS KAKENHI Grant 
Numbers JP16H03925, JP16H06337. 
\end{ack}

\begin{conventions}
We will work 
over $\mathbb C$, the complex number field, throughout 
this paper. A \emph{scheme} means a separated scheme of finite type over 
$\mathbb C$. 
A {\em{variety}} means a reduced scheme, that is, 
a reduced separated scheme of finite type over $\mathbb C$. 
We will freely use the standard notation of the minimal 
model program and the theory of quasi-log schemes 
as in \cite{fujino-foundations} (see also \cite{fujino-fund}). 
\end{conventions}

\section{Preliminaries}

In this section, let us recall some basic definitions 
and prove an easy lemma. 
 
\medskip

Let $D=\sum _i d_i D_i$ be an $\mathbb R$-divisor, where 
$D_i$ is a prime divisor and $d_i\in \mathbb R$ for every $i$ 
such that $D_i\ne D_j$ for $i\ne j$. 
We put 
\begin{equation*}
D^{<c} =\sum _{d_i<c}d_iD_i, \quad 
D^{\leq c}=\sum _{d_i\leq c} d_i D_i, \quad 
D^{= 1}=\sum _{d_i= 1} D_i, \quad \text{and} \quad
\lceil D\rceil =\sum _i \lceil d_i \rceil D_i, 
\end{equation*}
where $c$ is any real number and 
$\lceil d_i\rceil$ is the integer defined by $d_i\leq 
\lceil d_i\rceil <d_i+1$. 
Moreover, we put $\lfloor D\rfloor =-\lceil -D\rceil$ and 
$\{D\}=D-\lfloor D\rfloor$. 

\medskip 

Let $\Delta_1$ and $\Delta_2$ be $\mathbb R$-Cartier divisors on 
a scheme $X$. 
Then $\Delta_1\sim _{\mathbb R} \Delta_2$ means that $\Delta_1$ is 
$\mathbb R$-linearly equivalent to $\Delta_2$. 

\medskip 

Let $f: X\to Y$ be a morphism 
between schemes and let $B$ be an $\mathbb R$-Cartier divisor 
on $X$. 
Then $B\sim _{\mathbb R, f}0$ means that 
there exists an $\mathbb R$-Cartier divisor $B'$ on $Y$ such that 
$B\sim _{\mathbb R} f^*B'$. 

\medskip

Let $Z$ be a simple normal crossing 
divisor on a smooth variety $M$ and let $B$ be 
an $\mathbb R$-divisor 
on $M$ such that 
$Z$ and $B$ have no common irreducible components and 
that the support of $Z+B$ is a simple normal crossing divisor on $M$. In this 
situation, $(Z, B|_Z)$ is called a {\em{globally embedded simple 
normal crossing pair}}.

\medskip

Let us quickly look at the definition of 
qlc pairs. 

\begin{defn}[Quasi-log canonical pairs]\label{f-def2.1}
Let $X$ be a scheme and let $\omega$ be an 
$\mathbb R$-Cartier divisor (or an $\mathbb R$-line bundle) on $X$. 
Let $f:Z\to X$ be a proper morphism from a globally embedded 
simple normal 
crossing pair $(Z, \Delta_Z)$. If the natural map 
$\mathcal O_X\to f_*\mathcal O_Z(\lceil -(\Delta_Z^{<1})\rceil)$ is an 
isomorphism, $\Delta_Z=\Delta^{\leq 1}_Z$, 
and $f^*\omega\sim _{\mathbb R} K_Z+\Delta_Z$, 
then $\left(X, \omega, f:(Z, \Delta_Z)\to X\right)$ 
is called a {\em{quasi-log canonical pair}} 
({\em{qlc pair}}, for short). If there is no danger of confusion, 
we simply say that $[X, \omega]$ is a qlc pair. 
\end{defn}

Let $\left(X, \omega, f:(Z, \Delta_Z)\to X\right)$ be a quasi-log canonical 
pair as in 
Definition \ref{f-def2.1}. 
Let $\nu:Z^\nu\to Z$ be the normalization. 
We put $K_{Z^\nu}+\Theta=\nu^*(K_Z+\Delta_Z)$, that is, 
$\Theta$ is the sum of the inverse images of $\Delta_Z$ and 
the singular locus of $Z$. 
Then $(Z^\nu, \Theta)$ is sub log canonical in the usual sense. 
Let $W$ be a log canonical center of $(Z^\nu, \Theta)$ or 
an irreducible component of $Z^\nu$. 
Then $f\circ \nu(W)$ is called a {\em{qlc stratum}} of 
$\left(X, \omega, f:(Z, \Delta_Z)\to X\right)$. 
If there is no danger of confusion, we simply call it a qlc stratum of 
$[X, \omega]$. 
If $C$ is a qlc stratum of $[X, \omega]$ but is not 
an irreducible component of $X$, then $C$ is called 
a {\em{qlc center}} of $[X, \omega]$. 
The union of all qlc centers of $[X, \omega]$ is denoted by 
$\Nqklt (X, \omega)$. It is important that 
$[\Nqklt (X, \omega), \omega|_{\Nqklt(X, \omega)}]$ is a 
quasi-log canonical pair by adjunction 
(see, for example, \cite[Theorem 6.3.5]{fujino-foundations}). 

\medskip

Let us prepare an easy lemma for Corollary \ref{f-cor1.3}. 

\begin{lem}\label{f-lem2.2}
Let $[X, \omega]$ be a quasi-log canonical 
pair and let $g:X\to Y$ be a proper morphism 
between varieties with $g_*\mathcal O_X\simeq \mathcal O_Y$. 
Assume that $\omega\sim _{\mathbb R}
g^*\omega'$ holds for some $\mathbb R$-Cartier 
divisor {\em{(}}or $\mathbb R$-line bundle{\em{)}} 
$\omega'$ on $Y$. 
Then $[Y, \omega']$ is a quasi-log canonical 
pair such that $W'$ is a qlc stratum of $[Y, \omega']$ if and only 
if $W'=g(W)$ for some qlc stratum $W$ of $[X, \omega]$. 
\end{lem}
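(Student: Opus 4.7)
The plan is to take the given qlc data $f:(Z,\Delta_Z)\to X$ and simply compose with $g$, producing a candidate qlc structure $g\circ f:(Z,\Delta_Z)\to Y$ for $[Y,\omega']$. Three things must be checked: the globally embedded snc pair $(Z,\Delta_Z)$ already satisfies $\Delta_Z=\Delta_Z^{\le 1}$; the $\mathbb{R}$-linear equivalence $(g\circ f)^*\omega'\sim_{\mathbb{R}} K_Z+\Delta_Z$ follows immediately by combining $f^*\omega\sim_{\mathbb{R}} K_Z+\Delta_Z$ with the hypothesis $\omega\sim_{\mathbb{R}} g^*\omega'$; the only nontrivial point is the defining isomorphism of structure sheaves.

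For the last point I would argue as follows. Since $[X,\omega]$ is qlc, the natural map
\[
\mathcal O_X\longrightarrow f_*\mathcal O_Z\bigl(\lceil -(\Delta_Z^{<1})\rceil\bigr)
\]
is an isomorphism. Pushing this forward by $g$ and using $g_*\mathcal O_X\simeq\mathcal O_Y$, we obtain
\[
\mathcal O_Y\simeq g_*\mathcal O_X\overset{\sim}{\longrightarrow} g_*f_*\mathcal O_Z\bigl(\lceil -(\Delta_Z^{<1})\rceil\bigr)=(g\circ f)_*\mathcal O_Z\bigl(\lceil -(\Delta_Z^{<1})\rceil\bigr),
\]
which is precisely the required identification. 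This establishes that $\bigl(Y,\omega',g\circ f:(Z,\Delta_Z)\to Y\bigr)$ is a quasi-log canonical pair.

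For the strata statement I would unwind the definitions. By definition the qlc strata of $[X,\omega]$ are exactly the sets $f\circ\nu(W)$ with $W$ a log canonical center of $(Z^\nu,\Theta)$ or an irreducible component of $Z^\nu$, where $\nu:Z^\nu\to Z$ is the normalization and $K_{Z^\nu}+\Theta=\nu^*(K_Z+\Delta_Z)$. The same normalization $\nu$ and the same divisor $\Theta$ serve the qlc structure on $Y$ built above, so the qlc strata of $[Y,\omega']$ are exactly $g\circ f\circ\nu(W)=g\bigl(f\circ\nu(W)\bigr)$ for the same $W$. Hence $W'$ is a qlc stratum of $[Y,\omega']$ if and only if $W'=g(W)$ for some qlc stratum $W$ of $[X,\omega]$.

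The argument is essentially formal once the definitions are lined up; the potentially delicate step is the verification that taking $g_*$ of the qlc defining isomorphism really produces the required sheaf isomorphism on $Y$, which works cleanly thanks to the hypothesis $g_*\mathcal O_X\simeq\mathcal O_Y$ and the transitivity $(g\circ f)_*=g_*f_*$. No minimal model program, resolution, or Hodge-theoretic input is needed here, so I expect this to be the quickest lemma in the preliminaries.
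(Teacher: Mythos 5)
Your proposal is correct and follows exactly the paper's own argument: compose $f$ with $g$ to get $h=g\circ f$, verify the defining isomorphism via $g_*\mathcal O_X\simeq\mathcal O_Y$ and $(g\circ f)_*=g_*f_*$, and read off the strata from the unchanged normalization data. You have simply written out the details that the paper compresses into ``we can easily see.''
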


\begin{proof}
By definition, we can take a proper morphism $f:Z\to X$ from a globally 
embedded simple normal crossing pair $(Z, \Delta_Z)$ such that 
$f^*\omega\sim _{\mathbb R} K_Z+\Delta_Z$, 
$\Delta_Z=\Delta^{\leq 1}_Z$, and  
the natural map $\mathcal O_X\to f_*\mathcal O_Z(\lceil -(\Delta^{<1}_Z)
\rceil)$ is an isomorphism. Let us consider $h:=g\circ f: Z\to Y$. 
Then we can easily see that $\left(Y, \omega', h:(Z, \Delta_Z)\to Y\right)$ is 
a quasi-log canonical pair with the desired properties. 
\end{proof}

For the details of the theory of quasi-log 
schemes, see \cite[Chapter 6]{fujino-foundations}. 

\medskip 

Let us recall the definition of semi-log canonical pairs 
for the reader's convenience. 

\begin{defn}[Semi-log canonical pairs]\label{f-def2.3} 
Let $X$ be an equidimensional 
scheme which satisfies Serre's $S_2$ condition and 
is normal crossing in codimension one. 
Let $\Delta$ be an effective $\mathbb R$-divisor 
on $X$ such that no irreducible component of $\Supp \Delta$ is 
contained in the singular locus of $X$ and 
that $K_X+\Delta$ is $\mathbb R$-Cartier. 
We say that $(X, \Delta)$ is a {\em{semi-log canonical}} 
pair if 
$(X^\nu, \Delta_{X^\nu})$ is log canonical 
in the usual sense, where $\nu:X^\nu\to X$ is the normalization of 
$X$ and $K_{X^\nu}+\Delta_{X^\nu}=\nu^*(K_X+\Delta)$, 
that is, $\Delta_{X^\nu}$ is the sum of the inverse images of $\Delta$ 
and the conductor of $X$. 
An {\em{slc center}} of $(X, \Delta)$ is the $\nu$-image 
of an lc center of $(X^\nu, \Delta_{X^\nu})$. 
An {\em{slc stratum}} of $(X, \Delta)$ means 
either an slc center of $(X, \Delta)$ or an irreducible component of 
$X$. 
\end{defn}

For the details of semi-log canonical pairs, see \cite{fujino-slc} 
and \cite{kollar}. 

\section{Du Bois criteria}

A {\em{reduced pair}} is a pair 
$(X, \Sigma)$ where $X$ is a reduced scheme and $\Sigma$ is a reduced closed 
subscheme of $X$. 
Then we can define the {\em{Deligne--Du Bois complex}} 
of $(X, \Sigma)$, which is denoted by $\underline{\Omega}^\bullet_{X, \Sigma}$ 
(see \cite[5.3.1]{fujino-foundations}, 
\cite[Definition 6.4]{kollar}, 
\cite[Definition 3.9]{kovacs1}, \cite[Section 3]{steenbrink}, 
and so on). 
We note that $\underline{\Omega}^\bullet_{X, \Sigma}$ 
is a filtered complex of $\mathcal O_X$-modules in a suitable derived category. 
We put 
$$
\underline{\Omega}^p_{X, \Sigma}:=\Gr^p_{\mathrm{filt}}\!
\underline{\Omega}^\bullet_{X, \Sigma}[p]. 
$$
By taking $\Sigma=\emptyset$, we obtain the {\em{Deligne--Du Bois 
complex}} of $X$: 
$$
\underline{\Omega}^\bullet_X:=\underline{\Omega}^\bullet_{X, \emptyset} 
$$ 
and similarly 
$$
\underline{\Omega}^p_X:=\Gr^p_{\mathrm{filt}}\!
\underline{\Omega}^\bullet_X[p]. 
$$ 
By definition and construction, there exists a natural map 
$$
\mathcal I_{\Sigma}\to \underline{\Omega}^0_{X, \Sigma}, 
$$ 
where $\mathcal I_\Sigma$ is the defining ideal sheaf of 
$\Sigma$ on $X$. 

\begin{defn}[Du Bois pairs and Du Bois singularities]\label{f-def3.1}
A reduced pair $(X, \Sigma)$ is called a 
{\em{Du Bois pair}} if the natural map 
$\mathcal I_\Sigma
\to \underline{\Omega}^0_{X, \Sigma}$ is a quasi-isomorphism. 
When the natural map $\mathcal O_X\to \underline{\Omega}^0_X$ is a 
quasi-isomorphism, we say that $X$ has only {\em{Du Bois singularities}} 
or simply say that $X$ is {\em{Du Bois}}. 
\end{defn}

For the details, see, for example, \cite[Sections 2, 3, 4, and 5]{kovacs1}, 
\cite[Section 6.1]{kollar-kovacs}, and 
\cite[Section 5.3]{fujino-foundations}. 

\medskip 

The following lemma is a very minor modification of 
\cite[Theorem 3.3]{kovacs2} (see also \cite[Theorem 6.27]{kollar}). 
 
\begin{lem}\label{f-lem3.2}
Let $f:Y\to X$ be a proper morphism between varieties. 
Let $V\subset X$ be a closed reduced subscheme with 
ideal sheaf $\mathcal I_V$ and $W\subset Y$ with ideal sheaf 
$\mathcal I_W$. 
Assume that $f(W)\subset V$ and that 
the natural map 
$$
\rho: \mathcal I_V\to Rf_*\mathcal I_W
$$ 
admits a left inverse $\rho'$, that is, $\rho'\circ \rho$ is a quasi-isomorphism. 
In this situation, 
if $(Y, W)$ is a Du Bois pair, then $(X, V)$ is also a Du Bois pair. 
In particular, if $(Y, W)$ is a Du Bois pair, then 
$X$ is Du Bois if and only if $V$ is Du Bois. 
\end{lem}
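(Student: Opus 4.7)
The plan is to set up the standard functoriality square relating the Deligne--Du Bois comparison maps for the pairs $(X,V)$ and $(Y,W)$, and to transport the retract $(\rho,\rho')$ across it. Because $f(W)\subseteq V$, the morphism of reduced pairs $(Y,W)\to (X,V)$ induces a pullback $\gamma\colon \underline{\Omega}^0_{X,V}\to Rf_*\underline{\Omega}^0_{Y,W}$ fitting into the commutative square
\[
\begin{CD}
\mathcal I_V @>\rho>> Rf_*\mathcal I_W \\
@V\alpha VV @VV Rf_*\beta V \\
\underline{\Omega}^0_{X,V} @>\gamma>> Rf_*\underline{\Omega}^0_{Y,W}
\end{CD}
\]
in the derived category of coherent $\mathcal O_X$-modules, where $\alpha$ and $\beta$ are the comparison maps of Definition \ref{f-def3.1}. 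Since $(Y,W)$ is a Du Bois pair, $\beta$ is a quasi-isomorphism, and therefore so is $Rf_*\beta$.

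Combining the hypothesized left inverse $\rho'$ with the inverted right-hand vertical produces a candidate retract
$\delta := \rho'\circ (Rf_*\beta)^{-1}\circ \gamma\colon \underline{\Omega}^0_{X,V}\to \mathcal I_V$; a short chase using $\gamma\circ\alpha=Rf_*\beta\circ\rho$ and $\rho'\circ\rho=\mathrm{id}$ yields $\delta\circ\alpha=\mathrm{id}_{\mathcal I_V}$, so $\alpha$ is a split monomorphism in the derived category and $\underline{\Omega}^0_{X,V}\simeq \mathcal I_V\oplus K$ for some auxiliary complex $K$. The main obstacle is killing $K$, i.e., promoting the split injection $\alpha$ to a genuine quasi-isomorphism. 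My plan there is to mirror the strategy of \cite[Theorem 3.3]{kovacs2}: choose compatible cubical hyperresolutions of $(X,V)$ and $(Y,W)$ over $f$, so that $\gamma$ and the square above lift to honest morphisms of filtered complexes; in that realization $(\rho,\rho')$ induces a compatible retract at the hyperresolution level, and the resulting decomposition of $\underline{\Omega}^0_{X,V}$ is identified with the one coming from $\alpha$, forcing $K=0$. The phrase ``very minor modification'' in the lemma refers precisely to upgrading Kov\'acs's structure-sheaf argument to this pair version, which proceeds with no new input once the functoriality above is recorded.

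The ``In particular'' statement is then a formal consequence. With $\alpha$ known to be a quasi-isomorphism, I would feed it into the morphism of distinguished triangles
\[
\mathcal I_V\to \mathcal O_X\to \mathcal O_V\to\mathcal I_V[1]\quad\longrightarrow\quad
\underline{\Omega}^0_{X,V}\to \underline{\Omega}^0_X\to \underline{\Omega}^0_V\to\underline{\Omega}^0_{X,V}[1]
\]
and apply the two-out-of-three principle for quasi-isomorphisms in morphisms of distinguished triangles to conclude that $\mathcal O_X\to\underline{\Omega}^0_X$ is a quasi-isomorphism if and only if $\mathcal O_V\to\underline{\Omega}^0_V$ is, that is, $X$ is Du Bois if and only if $V$ is.
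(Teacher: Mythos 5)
Your setup is exactly the paper's: the functoriality square relating $\rho$ to the comparison maps of Definition \ref{f-def3.1}, the observation that the right-hand vertical is a quasi-isomorphism because $(Y,W)$ is a Du Bois pair, and the resulting left inverse $\rho'\circ (Rf_*\beta)^{-1}\circ\gamma$ of $\alpha$. Up to that point the argument is correct and is the whole of the reduction the paper performs. The ``in particular'' clause via the morphism of defining triangles and two-out-of-three is also fine.

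The gap is in how you dispose of the complement $K$ in $\underline{\Omega}^0_{X,V}\simeq \mathcal I_V\oplus K$. A split monomorphism in the derived category is in general very far from a quasi-isomorphism, and choosing compatible cubical hyperresolutions and ``identifying the decompositions'' cannot force $K=0$: nothing in that formal bookkeeping rules out a nonzero summand, and indeed for a non--Du Bois pair the map $\mathcal I_V\to\underline{\Omega}^0_{X,V}$ can perfectly well be injective on cohomology without being split at all, so the issue is not one of realization. The statement that a left inverse of $\alpha$ already forces $(X,V)$ to be a Du Bois pair is precisely Kov\'acs's splitting principle (\cite[Theorem 3.2]{kovacs2}; see also \cite[Theorem 5.5]{kovacs1} and \cite[Corollary 6.24]{kollar}), which is a substantive theorem: its proof uses the Hodge-theoretic surjectivity of $H^i(X,\mathbb C)\to \mathbb H^i(X,\underline{\Omega}^0_{X,\Sigma}\oplus\cdots)$ for proper $X$ together with a Serre-vanishing twisting argument and induction on dimension, not a formal comparison of splittings. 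The correct move at the point where you have produced the retract of $\alpha$ is simply to invoke that theorem, which is what the paper does; your attempted re-derivation of it is where the proposal would fail.
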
 

We give a proof of Lemma \ref{f-lem3.2} for the reader's convenience 
although it is the same as that of \cite[Theorem 3.3]{kovacs2}. 

\begin{proof}[Proof of Lemma \ref{f-lem3.2}]
By functoriality, we have the 
following commutative diagram 
$$
\xymatrix{
\mathcal I_V \ar[r]^-\rho\ar[d]_-\alpha& Rf_*\mathcal I_W\ar[d]^-\gamma\\ 
\underline{\Omega}^0_{X,V} \ar[r]_-\beta& Rf_*\underline{\Omega}^0_{Y, W}. 
}
$$
Since $(Y, W)$ is assumed to be a Du Bois pair, 
we see that $\gamma$ is a quasi-isomorphism. 
Therefore, $\rho'\circ \gamma^{-1}\circ \beta$ is a left inverse to $\alpha$. 
Then $(X, V)$ is a Du Bois pair by \cite[Theorem 3.2]{kovacs2} (see 
also \cite[Theorem 5.5]{kovacs1} and \cite[Corollary 6.24]{kollar}). 
When $(X, V)$ is a Du Bois pair, it is easy to see that 
$X$ is Du Bois if and only if $V$ is Du Bois by definition 
(see \cite[Proposition 6.15]{kollar} and \cite[Proposition 5.1]{kovacs1}). 
\end{proof}

We prepare one more easy lemma. 

\begin{lem}\label{f-lem3.3}
Let $f:Y\to X$ be a proper birational morphism 
from a smooth irreducible variety $Y$ onto a normal irreducible 
variety $X$. 
Let $B_Y$ be a subboundary $\mathbb R$-divisor 
on $Y$, that is, $B_Y=B^{\leq 1}_Y$, 
such that $\Supp B_Y$ is a simple normal crossing divisor 
on $Y$ and let $M_Y$ be an $f$-nef $\mathbb R$-divisor 
on $Y$. Assume that $K_Y+B_Y+M_Y\sim _{\mathbb R, f}0$, 
$B^{<0}_Y$ is $f$-exceptional, and $f(B^{=1}_Y)$ has only 
Du Bois singularities. 
Then $X$ has only Du Bois singularities. 
\end{lem}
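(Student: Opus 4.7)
The plan is to apply Lemma \ref{f-lem3.2} with $W:=B_Y^{=1}$ and $V:=f(B_Y^{=1})$. Since $Y$ is smooth and $B_Y^{=1}$ is a reduced simple normal crossing divisor on $Y$, the pair $(Y,W)$ is a Du Bois pair by the standard fact that smooth varieties with SNC boundary form Du Bois pairs. As $V$ is Du Bois by hypothesis, Lemma \ref{f-lem3.2} will yield that $X$ is Du Bois once I produce a left inverse to the natural map
$$
\rho\colon \mathcal I_V \longrightarrow Rf_*\mathcal I_W = Rf_*\mathcal O_Y(-B_Y^{=1}).
$$

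The main input is relative Kawamata--Viehweg vanishing applied to $L:=\lceil -B_Y\rceil$. Using $\lceil -B_Y\rceil=\{B_Y\}-B_Y$ together with $K_Y+B_Y+M_Y\sim _{\mathbb R, f}0$, I obtain
$$L\sim _{\mathbb R, f} K_Y+\{B_Y\}+M_Y,$$
where $(Y,\{B_Y\})$ is klt since $\{B_Y\}$ has SNC support with coefficients in $[0,1)$, while $M_Y$ is $f$-nef by hypothesis and automatically $f$-big because $f$ is birational. Hence $R^i f_*\mathcal O_Y(\lceil -B_Y\rceil)=0$ for all $i>0$. For the zeroth direct image, decompose $\lceil -B_Y\rceil=\lceil -B_Y^{<1}\rceil-B_Y^{=1}$ and note that $E:=\lceil -B_Y^{<1}\rceil=\lceil -B_Y^{<0}\rceil$ is effective and $f$-exceptional by hypothesis, so $f_*\mathcal O_Y(E)=\mathcal O_X$ ($X$ being normal). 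Because $\Supp E$ and $\Supp B_Y^{=1}$ are disjoint, a local section $g$ of $\mathcal O_X$ lies in $f_*\mathcal O_Y(\lceil -B_Y\rceil)$ if and only if $f^*g$ vanishes along every component of $B_Y^{=1}$, equivalently $g$ vanishes on the reduced subscheme $V=f(B_Y^{=1})$. Consequently $f_*\mathcal O_Y(\lceil -B_Y\rceil)=\mathcal I_V$, and together with the vanishing this yields an isomorphism $Rf_*\mathcal O_Y(\lceil -B_Y\rceil)\simeq \mathcal I_V$ in the derived category.

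Since $\lceil -B_Y^{<1}\rceil\geq 0$, there is a natural inclusion $\mathcal O_Y(-B_Y^{=1})\hookrightarrow \mathcal O_Y(\lceil -B_Y\rceil)$, which induces
$$
\rho'\colon Rf_*\mathcal O_Y(-B_Y^{=1})\longrightarrow Rf_*\mathcal O_Y(\lceil -B_Y\rceil)\simeq \mathcal I_V.
$$
All of these maps are the canonical ones between sub-ideal sheaves of $\mathcal O_X$, so $\rho'\circ \rho=\mathrm{id}_{\mathcal I_V}$ and $\rho'$ is the desired left inverse. Lemma \ref{f-lem3.2} then gives that $(X,V)$ is a Du Bois pair, and combined with the Du Bois hypothesis on $V$ it follows that $X$ is Du Bois. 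The only genuinely nontrivial step is the Kawamata--Viehweg vanishing for $\lceil -B_Y\rceil$; once this is granted, the remaining sheaf identification and the compatibility $\rho'\circ \rho=\mathrm{id}$ are routine.
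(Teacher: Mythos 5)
Your proof is correct and follows essentially the same route as the paper's: relative Kawamata--Viehweg vanishing for $\lceil -B_Y\rceil=-\lfloor B_Y\rfloor\sim _{\mathbb R, f}K_Y+\{B_Y\}+M_Y$, the identifications $f_*\mathcal O_Y(\lceil -B_Y\rceil)=f_*\mathcal O_Y(-B^{=1}_Y)=\mathcal I_V$, and the resulting left inverse of $\rho$ fed into Lemma \ref{f-lem3.2}. One small inaccuracy: $\Supp \lceil -(B^{<0}_Y)\rceil$ and $\Supp B^{=1}_Y$ need not be \emph{disjoint} (distinct components of a simple normal crossing divisor can meet); what your argument actually uses, and what is true, is only that they share no irreducible component, so the condition $\mathrm{div}(f^*g)+E-B^{=1}_Y\geq 0$ is checked prime divisor by prime divisor and reduces to the vanishing of $f^*g$ along each component of $B^{=1}_Y$.
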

\begin{proof} 
Since $B^{=1}_Y$ is a simple normal crossing divisor on a 
smooth variety $Y$, $B^{=1}_Y$ is Du Bois (see, for example, \cite[Proposition 
5.3.10]{fujino-foundations}). 
In particular, $(Y, B^{=1}_Y)$ is a Du Bois pair. 
We put $Z=f(B^{=1}_Y)$. 
We note that 
$$
-\lfloor B_Y\rfloor -(K_Y+\{B_Y\})\sim _{\mathbb R, f} M_Y
$$ 
is nef and big over $X$. Therefore, $R^if_*\mathcal O_Y(-\lfloor 
B_Y\rfloor)=0$ holds for every $i>0$ by the relative 
Kawamata--Viehweg vanishing theorem. 
Then we have 
$$
\mathcal I_Z= f_*\mathcal O_Y(-B^{=1}_Y)\to Rf_*\mathcal O_Y(-B^{=1}_Y) 
\to Rf_*\mathcal O_Y(-\lfloor B_Y\rfloor)\simeq f_*\mathcal O_Y
(-\lfloor B_Y\rfloor)= \mathcal I_Z, 
$$ 
where $\mathcal I_Z$ is the defining ideal sheaf of $Z$ on $X$. 
This means that the natural map 
$\rho:\mathcal I_Z\to Rf_*\mathcal O_Y(-B^{=1}_Y)$ has a left 
inverse. By Lemma \ref{f-lem3.2}, $(X, Z)$ is a Du Bois pair. 
By assumption, $Z=f(B^{=1}_Y)$ is Du Bois. 
Therefore, we obtain that $X$ has only Du Bois singularities 
(see \cite[Proposition 6.15]{kollar} and \cite[Proposition 5.1]{kovacs1}). 
\end{proof}

\section{Proof of Theorem \ref{f-thm1.1}}

In this section, we prove Theorem \ref{f-thm1.1} 
and Corollaries \ref{f-cor1.2} and \ref{f-cor1.3}. 

\medskip 

First let us prove Theorem 1.1. We note 
that the reduction step to normal irreducible varieties 
in the proof of Theorem 1.1 below is nothing 
but \cite[Proposition 1.4]{fujino-haidong}. 
We include it for the benefit of the reader. 

\begin{proof}[Proof of Theorem \ref{f-thm1.1}] 
Let $[X, \omega]$ be a quasi-log canonical pair. 
We prove Theorem \ref{f-thm1.1} by induction on $\dim X$. 
If $\dim X=0$, then the statement is obvious. 
Let $X_1$ be an irreducible component of $X$ and 
let $X_2$ be the union of the irreducible components 
of $X$ other than $X_1$. 
Then $[X_1, \omega|_{X_1}]$, $[X_2, 
\omega|_{X_2}]$, and $[X_1\cap X_2, 
\omega|_{X_1\cap X_2}]$ are quasi-log canonical pairs 
by adjunction 
(see, for example, \cite[Theorem 6.3.5]{fujino-foundations}). 
In particular, $X_1$, $X_2$, and $X_1\cap X_2$ are 
seminormal (see \cite[Remark 6.2.11]{fujino-foundations}). 
Then we have the following short exact sequence 
\begin{equation}\label{f-eq4.1}
0\to \mathcal O_X\to \mathcal O_{X_1}\oplus \mathcal O_{X_2}\to 
\mathcal O_{X_1\cap X_2}\to 0
\end{equation}
(see, for example, \cite[Lemma 10.21]{kollar}). 
We note that $X_1\cap X_2$ is Du Bois since 
$[X_1\cap X_2, 
\omega|_{X_1\cap X_2}]$ is a quasi-log canonical pair with 
$\dim X_1\cap X_2<\dim X$. 
By \eqref{f-eq4.1} and 
\cite[Lemma 5.3.9]{fujino-foundations}, 
it is sufficient to prove Theorem \ref{f-thm1.1} 
under the extra assumption that 
$X$ is irreducible by induction on 
the number of the irreducible components of $X$. 
Therefore, from now on, we assume that 
$X$ is irreducible. 
Let $\nu:Z\to X$ be the normalization. 
Then, by \cite[Theorem 1.1]{fujino-haidong}, 
$[Z, \nu^*\omega]$ naturally 
becomes a quasi-log canonical pair with 
\begin{equation}\label{f-eq4.2}
R\nu_*\mathcal I_{\Nqklt (Z, \nu^*\omega)}=\mathcal I_{\Nqklt (X, \omega)}. 
\end{equation} 
By induction on dimension, $\Nqklt (Z, \nu^*\omega)$ and 
$\Nqklt (X, \omega)$ are Du Bois. 
This is because 
$[\Nqklt (Z, \nu^*\omega), \nu^*\omega|_{\Nqklt (Z, \nu^*\omega)}]$ 
and $[\Nqklt (X, \omega), \omega|_{\Nqklt (X, \omega)}]$ are quasi-log 
canonical pairs by adjunction (see, for example, 
\cite[Theorem 6.3.5]{fujino-foundations}). 
If $Z$ is Du Bois, then $(Z, \Nqklt(Z, \nu^*\omega))$ is a 
Du Bois pair. In this case, 
we can easily see that $X$ is Du Bois by Lemma \ref{f-lem3.2} 
and \eqref{f-eq4.2} since 
$\Nqklt(X, \omega)$ is Du Bois. 
Therefore, it is sufficient to prove that $Z$ is Du Bois. 
This means that we may assume that $X$ is a normal 
irreducible variety for the proof of Theorem \ref{f-thm1.1}. 
By Theorem \ref{f-thm4.1} below and Lemma \ref{f-lem3.3}, 
we obtain that $X$ has only Du Bois singularities. 
We note that $\Nqklt (X, \omega)$ is Du Bois since 
$[\Nqklt (X, \omega), \omega|_{\Nqklt(X, \omega)}]$ is a 
quasi-log canonical pair with $\dim \Nqklt(X, \omega)
<\dim X$ by adjunction 
(see, for example, \cite[Theorem 6.3.5]{fujino-foundations}). 
Anyway, $X$ is always Du Bois when $[X, \omega]$ is a 
quasi-log canonical pair. 
\end{proof}

The following theorem is a special case of \cite[Theorem 1.5]{fujino-slc-trivial}. 

\begin{thm}[{see \cite[Theorem 1.5]{fujino-slc-trivial}}]\label{f-thm4.1}
Let $[X, \omega]$ be a quasi-log canonical pair such that 
$X$ is a normal irreducible variety. 
Then there exists a projective 
birational morphism 
$p:X'\to X$ from a smooth quasi-projective 
variety $X'$ such that 
$$
K_{X'}+B_{X'}+M_{X'}=p^*\omega, 
$$ 
where $B_{X'}$ is a subboundary $\mathbb R$-divisor, 
that is, $B_{X'}=B^{\leq 1}_{X'}$,  
such that $\Supp B_{X'}$ is a simple normal crossing 
divisor and that $B^{<0}_{X'}$ is 
$p$-exceptional, and $M_{X'}$ is an $\mathbb R$-divisor which is nef 
over $X$. Furthermore, we can make $B_{X'}$ satisfy 
$p(B^{=1}_{X'})=\Nqklt(X, \omega)$. 
\end{thm}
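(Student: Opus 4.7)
The plan is to derive Theorem \ref{f-thm4.1} as an application of the canonical bundle formula for basic slc-trivial fibrations established in \cite{fujino-slc-trivial}. First I would unpack the qlc data provided by Definition \ref{f-def2.1}: a proper morphism $f:(Z,\Delta_Z)\to X$ from a globally embedded simple normal crossing pair with $\Delta_Z=\Delta_Z^{\leq 1}$, $f^*\omega\sim_{\mathbb R}K_Z+\Delta_Z$, and $\mathcal O_X\xrightarrow{\sim}f_*\mathcal O_Z(\lceil -\Delta_Z^{<1}\rceil)$. Since $X$ is normal and irreducible, I would restrict attention to the union $Z_0\subset Z$ of those irreducible components that dominate $X$; components mapping into a proper closed subset contribute only to the discriminant on $X$ and can be absorbed by a further birational modification of the base.

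Second, I would verify that $f_0:=f|_{Z_0}:(Z_0,\Delta_Z|_{Z_0})\to X$ satisfies the hypotheses of a basic slc-trivial fibration in Fujino's sense: $Z_0$ is simple normal crossing and equidimensional over $X$, $K_{Z_0}+\Delta_Z|_{Z_0}\sim_{\mathbb R,f_0}0$, the generic fiber has the expected sub-slc structure, and the pushforward condition for the fractional part survives the restriction. The delicate point here is that the global isomorphism $\mathcal O_X\simeq f_*\mathcal O_Z(\lceil -\Delta_Z^{<1}\rceil)$ may involve contributions from non-dominating components, so one must check that $f_{0,*}\mathcal O_{Z_0}(\lceil -(\Delta_Z|_{Z_0})^{<1}\rceil)$ still equals $\mathcal O_X$ after this restriction.

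Third, I would invoke \cite[Theorem 1.5]{fujino-slc-trivial}, which, via the theory of variations of mixed Hodge structure on cohomology with compact support, produces a projective birational morphism $p:X'\to X$ from a smooth quasi-projective variety together with a discriminant $\mathbb R$-divisor $B_{X'}$ with $B_{X'}=B_{X'}^{\leq 1}$ and $\Supp B_{X'}$ simple normal crossing, and a moduli part $M_{X'}$ that is $\mathbb R$-Cartier and nef over $X$, satisfying $K_{X'}+B_{X'}+M_{X'}=p^*\omega$. That $B_{X'}^{<0}$ is $p$-exceptional follows from the construction of the discriminant via log canonical thresholds on non-exceptional divisors, which automatically yields coefficients in $[0,1]$ there.

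The main obstacle, and the fourth and last step, is to verify the identity $p(B_{X'}^{=1})=\Nqklt(X,\omega)$. One direction is essentially definitional: a prime divisor $D\subset X'$ with coefficient $1$ in $B_{X'}$ corresponds, after a common log resolution, to a log canonical center in the normalization of $Z_0$ whose image on $X$ is by construction a qlc center of $[X,\omega]$. For the reverse inclusion, I would use the description of qlc strata from the discussion after Definition \ref{f-def2.1}: every qlc center arises as the image under $f\circ\nu$ of an lc center of $(Z^\nu,\Theta)$, and after further blowing up one can arrange such an lc center to dominate a coefficient-$1$ divisor in the discriminant on some birational model of $X$, which then descends to a component of $B_{X'}^{=1}$ on $X'$. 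The bookkeeping matching qlc strata with the components of the discriminant picked up by the canonical bundle formula is exactly where the geometric content of \cite{fujino-slc-trivial} is used.
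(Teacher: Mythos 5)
The paper offers no proof of this statement at all: it is quoted as a special case of \cite[Theorem 1.5]{fujino-slc-trivial}, with only the remark that the proof rests on basic slc-trivial fibrations and the theory of variations of mixed Hodge structure on cohomology with compact support. Your outline --- pass to the components of $Z$ dominating $X$, verify the axioms of a basic slc-trivial fibration, and apply the canonical bundle formula of \cite{fujino-slc-trivial} --- is precisely the strategy of that reference, so you are taking the same route; note only that the points you yourself flag as delicate (preservation of $f_*\mathcal O_Z(\lceil -(\Delta_Z^{<1})\rceil)\simeq \mathcal O_X$ after discarding non-dominating components, the nefness of $M_{X'}$, and the equality $p(B^{=1}_{X'})=\Nqklt(X,\omega)$) are exactly the content of \cite[Theorem 1.5]{fujino-slc-trivial} and remain black-boxed in your write-up, just as they do in the paper.
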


The proof of Theorem \ref{f-thm4.1} uses 
the notion of basic slc-trivial fibrations, which 
is some kind of canonical bundle formula for reducible varieties 
(see \cite{fujino-slc-trivial}). 
Note that \cite{fujino-slc-trivial} depends on some deep 
results on the theory of variations of mixed Hodge structure 
on cohomology with compact support (see 
\cite{fujino-fujisawa}). 

\medskip 

Finally we prove Corollaries \ref{f-cor1.2} and 
\ref{f-cor1.3}. 

\begin{proof}[Proof of Corollary \ref{f-cor1.2}]
Without loss of generality, by shrinking $X$ suitably, 
we may assume that $X$ is quasi-projective 
since the problem is Zariski local. 
Then, by \cite[Theorem 1.2]{fujino-slc}, $[X, K_X+\Delta]$ has a natural 
qlc structure, which is compatible with the original semi-log canonical 
structure. 
For the details, see \cite{fujino-slc}. 
Therefore, any union of slc strata of $(X, \Delta)$ is a 
quasi-log canonical pair. 
Thus, by Theorem \ref{f-thm1.1}, it is Du Bois. 
\end{proof}

\begin{proof}[Proof of Corollary \ref{f-cor1.3}]
Let $f:Z\to X$ be a resolution of singularities such that 
$f^*(K_X+\Delta)=K_Z+\Delta_Z$, 
$\Delta_Z=\Delta^{\leq 1}_Z$, and 
$\Supp \Delta_Z$ is a simple normal crossing divisor 
on $Z$. 
Since $Z$ is irreducible, we can see that $(Z, \Delta_Z)$ is a globally 
embedded simple normal crossing pair. 
Since $\lceil -(\Delta^{<1}_Z)\rceil$ is effective and $f$-exceptional, 
$f_*\mathcal O_Z(\lceil -(\Delta^{<1}_Z)\rceil)\simeq \mathcal O_X$. 
Therefore, $$\left(X, \omega, f:(Z, \Delta_Z)\to X\right),$$ where 
$\omega:=K_X+\Delta$, is a quasi-log canonical 
pair such that $C$ is a log canonical center of $(X, \Delta)$ if and 
only if $C$ is a qlc center of $\left(X, \omega, f: (Z, \Delta_Z)\to X\right)$. 
We take an $\mathbb R$-Cartier divisor (or $\mathbb R$-line bundle) 
$\omega'$ on $Y$ such that 
$\omega\sim _{\mathbb R} g^*\omega'$. 
Then, by Lemma \ref{f-lem2.2}, 
$$\left(Y, \omega', g\circ f: (Z, \Delta_Z)\to Y\right)$$ is a quasi-log 
canonical pair. 
By Theorem \ref{f-thm1.1}, $Y$ is Du Bois. 
We note that $g(W)$ is a union of qlc strata of 
$\left(Y, \omega', g\circ f: (Z, \Delta_Z)\to Y\right)$. 
Therefore, by adjunction (see, 
for example, \cite[Theorem 6.3.5]{fujino-foundations}), 
$[g(W), \omega'|_{g(W)}]$ is also a quasi-log canonical 
pair. Then, by Theorem \ref{f-thm1.1} again, 
$g(W)$ has only Du Bois singularities. 
\end{proof}

By combining \cite[Theorem 1.2]{fujino-slc} with Lemma \ref{f-lem2.2}, 
we can generalize Corollary \ref{f-cor1.3} as follows. 

\begin{cor}\label{f-cor4.2}
Let $g:X\to Y$ be a projective surjective 
morphism between varieties with $g_*\mathcal O_X\simeq 
\mathcal O_Y$. 
Assume that there exists an effective $\mathbb R$-divisor 
$\Delta$ on $X$ such that $(X, \Delta)$ is semi-log canonical 
and $K_X+\Delta\sim _{\mathbb R, g}0$. 
Then $Y$ has only Du Bois singularities. 

More generally, let $W\subset X$ be a reduced closed subscheme 
that is a union of slc strata of $(X, \Delta)$. 
Then $g(W)$ is Du Bois. 
\end{cor}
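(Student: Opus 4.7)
The plan is to reduce Corollary \ref{f-cor4.2} to Theorem \ref{f-thm1.1} by the same bootstrap used for Corollary \ref{f-cor1.3}, with the single modification that the qlc structure on $X$ no longer arises from a log resolution of an lc pair but from the slc-to-qlc construction of \cite[Theorem 1.2]{fujino-slc} that was invoked in the proof of Corollary \ref{f-cor1.2}. Since the Du Bois property is Zariski local on $Y$, I would first shrink $Y$ (and hence $X$, via the projective morphism $g$) so that both are quasi-projective, which is the setting in which \cite[Theorem 1.2]{fujino-slc} is applied.

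Setting $\omega := K_X + \Delta$, \cite[Theorem 1.2]{fujino-slc} endows $[X, \omega]$ with a natural qlc structure $\bigl(X, \omega, f:(Z, \Delta_Z)\to X\bigr)$ whose qlc strata are precisely the slc strata of $(X, \Delta)$. The hypothesis $K_X + \Delta \sim_{\mathbb R, g} 0$ provides an $\mathbb R$-Cartier divisor $\omega'$ on $Y$ with $\omega \sim_{\mathbb R} g^*\omega'$, so Lemma \ref{f-lem2.2} applied to the composition $g \circ f : Z \to Y$ produces a quasi-log canonical pair $\bigl(Y, \omega', g\circ f:(Z, \Delta_Z) \to Y\bigr)$ whose qlc strata are exactly the images $g(S)$ of qlc strata $S$ of $[X, \omega]$. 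Theorem \ref{f-thm1.1} then immediately gives that $Y$ has only Du Bois singularities.

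For the general statement, the subscheme $W$ is a union of slc strata of $(X, \Delta)$, hence a union of qlc strata of $[X, \omega]$, so by the stratum description above, $g(W)$ is a union of qlc strata of $[Y, \omega']$. Adjunction (see \cite[Theorem 6.3.5]{fujino-foundations}) then equips $[g(W), \omega'|_{g(W)}]$ with the structure of a quasi-log canonical pair, and a second application of Theorem \ref{f-thm1.1} concludes that $g(W)$ is Du Bois. The only substantive ingredients are the two external inputs, \cite[Theorem 1.2]{fujino-slc} (which carries all of the work of transferring an slc structure to a compatible qlc structure) and Theorem \ref{f-thm1.1} itself (the main result of the paper); once these are granted, the argument is a bookkeeping application of Lemma \ref{f-lem2.2} together with adjunction, so I do not expect any genuine obstacle beyond correctly matching the stratum correspondences.
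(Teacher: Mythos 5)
Your proposal is correct and follows exactly the route the paper indicates for this corollary (the paper gives no detailed proof, only the remark that it follows by combining \cite[Theorem 1.2]{fujino-slc} with Lemma \ref{f-lem2.2} and Theorem \ref{f-thm1.1}); your added step of shrinking $Y$ to reduce to the quasi-projective case is the same localization used in the proof of Corollary \ref{f-cor1.2} and is harmless since the Du Bois property is Zariski local.
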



\end{document}